\documentclass[12pt]{amsart}

\title{Enumeration of planar Tangles}
\author{Douglas A. Torrance}
\email{dtorrance@piedmont.edu}
\address{Department of Mathematical Sciences \\ Piedmont College \\ PO
  Box 10 \\ 1021 Central Ave \\ Demorest, GA 30535}

\usepackage{hyperref}
\usepackage{diagbox}
\usepackage{physics}
\usepackage{subcaption}
\usepackage{tikz}

\usetikzlibrary{calc}

\usepackage{chngcntr}
\counterwithin{figure}{section}
\counterwithin{table}{section}

\usepackage{tikz}

\newtheorem{theorem}{Theorem}[section]
\newtheorem{lemma}[theorem]{Lemma}

\keywords{Tangle, dual graph, polyomino, growth constant}

\begin{document}

\maketitle

\begin{abstract}
  A planar Tangle is a smooth simple closed curve piecewise defined by quadrants of circles with constant curvature.  We can enumerate Tangles by counting their dual graphs, which consist of a certain family of polysticks.  The number of Tangles with a given length or area grows exponentially, and we show the existence of their growth constants by comparing Tangles to two families of polyominoes.
\end{abstract}

\section{Introduction}

Suppose a child plays with a toy train set.  Given a certain number of pieces of track, how many different railroad configurations may be created?  We may attempt to answer a simplified version of this question, in which all the pieces of track are quarter-circles, by enumerating a particular combinatorial object.

A \textit{Tangle} is a smooth simple closed curve which is piecewise defined by quadrants of circles, each with the same curvature.  These quadrants are the \textit{links} of a Tangle, and the \textit{joints} are the points at which the links intersect.  A Tangle which is also a plane curve is known as a \textit{planar Tangle} or \textit{Tanglegram}.  We will avoid use of the latter term, as to a biologist, a tanglegram is a diagram used to compare phylogenetic trees \cite{treemap}.  Furthermore, as we focus only on planar Tangles in this paper, we will say ``Tangle'' when we really mean ``planar Tangle.''

The number of links in a Tangle its \textit{length}.  A Tangle with length $n$ is referred to as an $n$-Tangle.  The length of a Tangle is necessarily a multiple of four \cite{fleron1}, and so we say that a $4c$-Tangle has \textit{class} $c$.

Much of the previous work on Tangles has dealt with the transformation of one $n$-Tangle to another using various reflections and translations.  In fact, a conjecture made in \cite{chan,fleron1} that any $n$-Tangle could be obtained in this way from any other $n$-Tangle was proven false in \cite{taylor}.  Tangles have also been used for Truchet tilings of the plane \cite{browne}, to create fonts \cite{taylor}, and to introduce mathematics to liberal arts students \cite{hotchkiss}.  \textit{Tangloids}, which are the roulettes generated by a circle rolling along the interior of a Tangle, have also been studied \cite{schumacher1,schumacher2}.

In order to enumerate Tangles, we must first decide when two Tangles are equivalent.  First, after scaling and rotating, we may assume that any two Tangles have the same curvature and that all of their joints lie on one of the intercardinal points (i.e., northeast, southeast, southwest, northwest) of the corresponding circles.

If we may translate one Tangle to obtain another, then they are equivalent as \textit{fixed} Tangles.  If we may translate and rotate one Tangle to obtain another, then they are equivalent as \textit{one-sided} Tangles.  If we may translate, rotate, and reflect one Tangle to obtain another, then they are equivalent as \textit{free} Tangles.

The shortest Tangles are the \textit{circle}, the only 4-Tangle of any variety, and the \textit{dumbbell}, the only free or one-sided 8-Tangle  (see Figure \ref{circle and dumbbell}).  The dumbbell corresponds to two fixed 8-Tangles.  The number of free $4c$-Tangles is known for $c\leq 11$ \cite[A000644]{oeis}.

\begin{figure}
  \begin{tikzpicture}
    \draw[line width=4pt] (0,0) circle (0.5);
    \draw[line width=4pt] (2,0) + (45:0.5) arc (45:315:0.5);
    \draw[line width=4pt] (2,0) + (45:0.5) arc (225:315:0.5);
    \draw[line width=4pt] (2,0) + (315:0.5) arc (135:45:0.5);
    \draw[line width=4pt] ({2 + sqrt(2)}, 0) + (135:0.5) arc (135:-135:0.5);
    \fill[white] (45:0.5) circle (1.5pt);
    \fill[white] (135:0.5) circle (1.5pt);
    \fill[white] (-45:0.5) circle (1.5pt);
    \fill[white] (-135:0.5) circle (1.5pt);
    \fill[white] (2,0) + (45:0.5) circle (1.5pt);
    \fill[white] (2,0) + (135:0.5) circle (1.5pt);
    \fill[white] (2,0) + (-45:0.5) circle (1.5pt);
    \fill[white] (2,0) + (-135:0.5) circle (1.5pt);
    \fill[white] ({2 + sqrt(2)},0) + (45:0.5) circle (1.5pt);
    \fill[white] ({2 + sqrt(2)},0) + (135:0.5) circle (1.5pt);
    \fill[white] ({2 + sqrt(2)},0) + (-45:0.5) circle (1.5pt);
    \fill[white] ({2 + sqrt(2)},0) + (-135:0.5) circle (1.5pt);
     \end{tikzpicture}
  \caption{circle (left) and dumbbell (right)}
  \label{circle and dumbbell}
\end{figure}
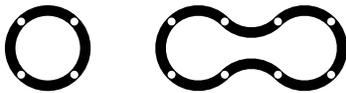

In \S 2, we introduce the \textit{dual graph} of a Tangle, a combinatorial description first appearing in \cite{taylor}.  We then use the dual graph to compute the area enclosed by a Tangle (\S 3) and to develop an algorithm for the enumeration of Tangles (\S 4).  In \S 5, we introduce two additional combinatorial descriptions of Tangles involving polyominoes suggested by \cite{chan,fleron1}, which we use in \S 6 to study the growth constants of Tangles relative to both area and length.

\section{Dual Graphs}

Consider the circles in a square packing of the plane.
We can color the circles with two colors, say black and white like a chessboard, so that no two kissing circles  have the same color.

The links of a Tangle all belong to circles in such a packing.
As a particle moves along a Tangle, it will occasionally (except for the case of the trivial 4-Tangle) move from one circle to another.  When it does this, it will switch from a link belonging to a circle lying inside the Tangle to a link belonging to a circle lying outside the Tangle, or vice versa.
These interior and exterior circles will have opposite colors, say black and white, respectively.
See Figure \ref{circle packing}.

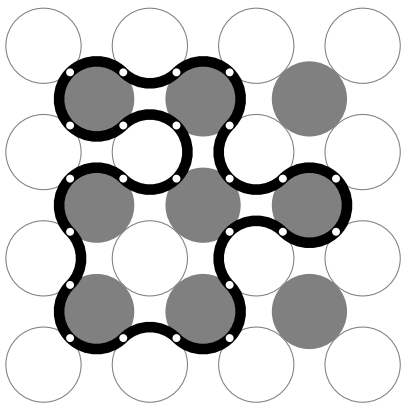
\begin{figure}
  \begin{tikzpicture}
    \foreach \x in {1,2,3,4}
    \foreach \y in {1,2,3,4}
      \draw[gray] ({\x*sqrt(2)}, {\y*sqrt(2)}) circle (0.5);
    \foreach \x in {1,2,3}
    \foreach \y in {1,2,3} {
      \fill[gray] ({(\x+1/2)*sqrt(2)}, {(\y + 1/2)*sqrt(2)}) circle (0.5);
 (1.5pt);
    }
    \draw[line width=4pt] ({sqrt(2)},{2*sqrt(2)}) + (45:0.5) arc (225:45:0.5)
    arc (225:315:0.5) arc (-45:135:0.5) arc (315:45:0.5) arc (225:315:0.5)
    arc (135:-45:0.5) arc (135:315:0.5) arc (135:-135:0.5) arc (45:135:0.5)
    arc (135:225:0.5) arc (45:-135:0.5) arc (45:135:0.5) arc (315:135:0.5)
    arc (-45:45:0.5);
    \foreach \x in {1, 2}
    \foreach \i in {0, 1, 2, 3} {
      \fill[white] ({(\x + 1/2)*sqrt(2)}, {3.5 * sqrt(2)}) + ({45 + 90 * \i}:0.5) circle (1.5pt);
    }
    \foreach \i in {0, 1, 2, 3} {
      \fill[white] ({3.5 * sqrt(2)}, {2.5 * sqrt(2)}) + ({45 + 90 * \i}:0.5) circle (1.5pt);
    }
    \foreach \i in {0, 1, 2} {
      \fill[white] ({1.5 * sqrt(2)}, {2.5 * sqrt(2)}) + ({45 + 90 * \i}:0.5) circle (1.5pt);
    }
    \foreach \i in {0, 1, 3} {
      \fill[white] ({2.5 * sqrt(2)}, {2.5 * sqrt(2)}) + ({45 + 90 * \i}:0.5) circle (1.5pt);
    }
    \foreach \i in {1, 2, 3} {
      \fill[white] ({1.5 * sqrt(2)}, {1.5 * sqrt(2)}) + ({45 + 90 * \i}:0.5) circle (1.5pt);
    }
    \foreach \i in {0, 2, 3} {
      \fill[white] ({2.5 * sqrt(2)}, {1.5 * sqrt(2)}) + ({45 + 90 * \i}:0.5) circle (1.5pt);
    }
  \end{tikzpicture}
  \caption{24-Tangle with circles in a square packing of the plane}
  \label{circle packing}
\end{figure}

We define the \textit{dual graph} of a Tangle as follows.  At the center of every black circle inside the Tangle, draw a vertex.  Connect two vertices with an edge if it does not intersect the Tangle and intersects no other circles in the packing.  Note that every vertex is incident to at most four edges, one for each of the cardinal directions.  See Figure \ref{dual graph example}.

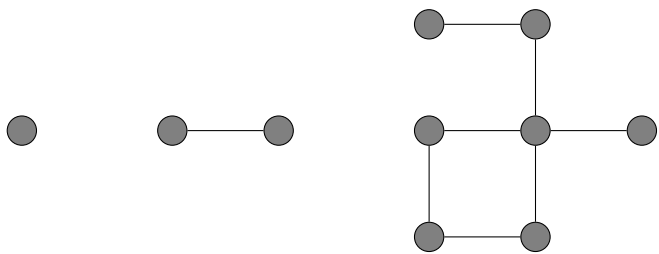
\begin{figure}
  \begin{tikzpicture}
    \path (0,0) node[draw,shape=circle,fill=gray] (a) {};
    \path ({sqrt(2)},0) node[draw,shape=circle,fill=gray] (b) {};
    \path (0,{sqrt(2)}) node[draw,shape=circle,fill=gray] (c) {};
    \path ({sqrt(2)},{sqrt(2)}) node[draw,shape=circle,fill=gray] (d) {};
    \path ({2*sqrt(2)},{sqrt(2)}) node[draw,shape=circle,fill=gray] (e) {};
    \path (0,{2*sqrt(2)}) node[draw,shape=circle,fill=gray] (f) {};
    \path ({sqrt(2)},{2*sqrt(2)}) node[draw,shape=circle,fill=gray] (g) {};
    \path (-2, {sqrt(2)}) node[draw,shape=circle,fill=gray] (h) {};
    \path ({-2 - sqrt(2)},{sqrt(2)}) node[draw,shape=circle,fill=gray] (i) {};
    \path ({-4 - sqrt(2)},{sqrt(2)}) node[draw,shape=circle,fill=gray] (j) {};
    \draw (d) -- (c) -- (a) -- (b) -- (d) -- (e);
    \draw (d) -- (g) -- (f);
    \draw (h) -- (i);
  \end{tikzpicture}
  \caption{Dual graphs of the circle, dumbbell, and the 24-Tangle in Figure \ref{circle packing}}
  \label{dual graph example}
\end{figure}

The dual graph of a Tangle is not a graph in the strictest sense of the word, as the positions of the vertices and edges in relation to one another have meaning.
Instead it is a \textit{polystick} \cite{barwell}.
However, we will still use familiar graph theoretical terminology to describe a dual graph.  For example, if the dual graph of a Tangle contains no cycles, then we say that it is a tree.

Not every polystick is the dual graph of some Tangle.  Indeed, a cycle in a dual graph must be ``filled in'' with squares, as it corresponds to a cluster of black circles inside the Tangle.  In other words, the only chordless cycles in a dual graph are squares.

\begin{lemma}\label{class without squares}
  If a Tangle has a dual graph which is a tree with $m$ edges, then it has class $m+1$.
\end{lemma}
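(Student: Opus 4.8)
The plan is to count the links of the Tangle in two different ways and then combine the results; the tree hypothesis will enter only at the very end. Write $V$ for the number of vertices and $E$ for the number of edges of the dual graph, so that $V$ is the number of interior (black) circles and $E$ is the number of adjacent pairs of interior circles joined by an edge. Every link is a quadrant of exactly one circle of the packing, and since a link has the interior of the Tangle on one side and the exterior on the other, that circle is either interior (black) or exterior (white). I will call a link \emph{convex} in the first case and \emph{concave} in the second, and let $P$ and $N$ denote the numbers of convex and concave links, so that the length is $P+N$.

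First I would compute $P-N$ using the smoothness of the Tangle. Because adjacent circles of the packing are tangent at their common joint, the tangent direction varies continuously along the curve, so the total turning is $2\pi$ by the theorem of turning tangents. Traversing the curve with the interior on the left, a convex quadrant turns the tangent by $+\pi/2$ and a concave quadrant by $-\pi/2$, so $(P-N)\tfrac{\pi}{2}=2\pi$, that is, $P-N=4$.

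Next I would compute $P$ in terms of $V$ and $E$. Each interior circle has four quadrants, one for each cardinal direction, so there are $4V$ black quadrants in all. Such a quadrant fails to be a convex link exactly when the region on its far side is also interior, and this happens precisely when the neighboring circle in that direction is an interior circle joined to it by an edge of the dual graph. Each edge accounts for two such quadrants, one from each of its endpoints, so the number of black quadrants that are not links is $2E$, and therefore $P=4V-2E$. The delicate point is the geometric claim behind this count: a black quadrant is a link whenever the adjacent neck is exterior — including the case of two interior circles that are cardinally adjacent but \emph{not} joined by an edge, a neck pinched off by white circles, as occurs in a partially filled square — and it fails to be a link only across an edge. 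This neck analysis, distinguishing interior from exterior necks, is the step I expect to require the most care.

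Combining the two counts gives the length $P+N=2P-4=8V-4E-4$, so the class is $2V-E-1$. Finally I would invoke the tree hypothesis: a tree with $m$ edges has $m+1$ vertices, so $V=m+1$ and $E=m$, whence the class equals $2(m+1)-m-1=m+1$, as claimed. (Alternatively, one can argue by induction on $m$: the single circle has class $1$, and attaching a new interior circle as a leaf turns one exterior neck into an interior one, removing one convex link, adding the three remaining convex links of the new circle, and creating two concave links along the new neck, for a net gain of four links, hence one class, at each step.)
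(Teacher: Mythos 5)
Your proof is correct, and it takes a genuinely different route from the paper's. The paper proceeds by induction on $m$: remove an edge incident to a leaf, observe from a local picture that this deletes five links and adds one back (a net change of $-4$, i.e., one class), and apply the induction hypothesis --- this is exactly the alternative you sketch in your final parenthetical. Your main argument is instead a direct double count: the theorem of turning tangents gives $P-N=4$, the neck analysis gives $P=4V-2E$, and hence the class is $2V-E-1$, with the tree hypothesis entering only through $V=m+1$. This buys you more than the lemma asks: combined with Euler's formula (the paper's Lemma \ref{euler formula}, $v+k-m=1$), the identity $\text{class}=2V-E-1$ is equivalent to $\text{class}=m-2k+1$, so your argument in effect proves Lemma \ref{class with squares} in one stroke, with the present lemma as the special case $k=0$; it also sidesteps a point the induction leaves implicit, namely that deleting a leaf edge from the dual graph of a Tangle again yields the dual graph of a Tangle. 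The cost is the step you rightly flag as delicate: one must verify that a quadrant of an interior circle is a link precisely when the neck on its far side is exterior, and that interior necks correspond exactly to edges. Both verifications go through --- in particular, if a neck is interior then the neighboring black circle must also be interior (an exterior black circle cannot carry a link, since along the Tangle the circles carrying links alternate between interior and exterior, hence between the two colors), and two cardinally adjacent interior circles whose common neck is exterior contribute two links and no edge, exactly as in your ``partially filled square'' --- so the count $P=4V-2E$ is sound. The paper's induction, by contrast, confines all of the geometry to a single local figure.
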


\begin{proof}
  We use induction on $m$.  For the base case, the only Tangle whose dual graph has no edges is the circle, which has class 1.

  Suppose $m\geq 1$ and we remove from the dual graph an edge which is incident to a leaf, i.e., a degree 1 vertex.  Then we obtain a dual graph with $m-1$ edges, which by induction corresponds to a Tangle with class $m$.  But in doing this, we remove 5 links and add 1, a net change of $-4$ (see Figure \ref{class without squares proof}), and so the original Tangle has class $m+1$.
\end{proof}

\begin{figure}
  \begin{tikzpicture}
    \path (0,0) node[draw,shape=circle,fill=gray] (a) {};
    \path ({sqrt(2)},0) node[draw,shape=circle,fill=gray] (b) {};
    \draw[line width=4pt] (-45:0.5) arc (-45:45:0.5);
    \draw[line width=4pt,color=lightgray] (45:0.5) arc (225:315:0.5);
    \draw[line width=4pt,color=lightgray] (b) + (-135:0.5) arc (-135:135:0.5);
    \draw[line width=4pt,color=lightgray] (-45:0.5) arc (135:45:0.5);
    \draw[line width=4pt] (-45:0.5) arc (-45:45:0.5);
    \draw[dashed] (a) -- (b);
    \foreach \i in {0, 3} {
      \fill[white] ({45 + 90 * \i}:0.5) circle (1.5pt);
    }
    \foreach \i in {0,1, 2, 3} {
      \fill[white] ({sqrt(2)}, 0) + ({45 + 90 * \i}:0.5) circle (1.5pt);
    }
  \end{tikzpicture}
  \caption{Proof of Lemma \ref{class without squares}}
  \label{class without squares proof}
\end{figure}

\begin{lemma}\label{class with squares}
  If a Tangle has a dual graph with $m$ edges and $k$ squares, then it has class $m-2k+1$.
\end{lemma}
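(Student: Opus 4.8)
The plan is to compute the length of the Tangle directly by counting its quarter-circle links in terms of the combinatorial data $(V,E,k)$ of the dual graph, where $V$ is the number of vertices (black interior circles), $E=m$ is the number of edges, and $k$ is the number of squares, and then to eliminate $V$ using Euler's formula. Since a Tangle is a single simple closed curve, its interior is connected and simply connected; correspondingly the dual graph is connected and, by the standing hypothesis that its only chordless cycles are squares, each of its $k$ bounded faces is a single unit square. Euler's formula then gives $V-E+k=1$, i.e. $V=m-k+1$.

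Next I would classify the links as \emph{convex} (belonging to a black interior circle and bulging away from the Tangle's interior) or \emph{concave} (belonging to a white exterior circle and forming a ``neck'' between adjacent black circles). For the convex links, observe that the four directions in which a black circle can carry a dual-graph edge correspond exactly to its four quadrants; an incident edge replaces the corresponding quadrant by a neck, while every remaining quadrant survives as a link. Hence a vertex $v$ contributes $4-\deg v$ convex links, and summing over all vertices gives $\sum_v(4-\deg v)=4V-2E$ convex links.

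The concave links are where I expect the main obstacle, since they require deciding which white circles are enclosed by the Tangle. Each edge is flanked by exactly two white circles, one on each side of the neck, giving $2E$ incidences, and such a white circle contributes one concave link precisely when it lies in the exterior. A white circle lies in the interior exactly when it is surrounded on all four sides by black circles, that is, when it sits at the centre of a square; there are $k$ such circles, and each is flanked by all four edges of its square. These account for $4k$ of the $2E$ incidences and contribute no links, so the number of concave links is $2E-4k$. The delicate point to verify is that every remaining flanking white circle is genuinely exterior and contributes exactly one concave quarter-arc, with no arc double-counted; this analysis is local to each neck and mirrors the computation illustrated in Figure \ref{class without squares proof}.

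Combining the two counts, the total number of links is $(4V-2E)+(2E-4k)=4(V-k)$, so the Tangle has length $4(V-k)$ and hence class $V-k$. Substituting $V=m-k+1$ yields class $m-2k+1$, as claimed; as a consistency check, setting $k=0$ recovers a tree with $V=m+1$ and class $m+1$, in agreement with Lemma \ref{class without squares}. An alternative route, closer in spirit to the proof of that lemma, is induction on $k$: one locates a square having an edge on the outer face (an extremal square always has one), removes that edge to obtain a valid dual graph with $m-1$ edges and $k-1$ squares, and checks that this local move raises the class by exactly $1$; iterating $k$ times reduces to the tree case.
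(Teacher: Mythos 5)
Your proof is correct, but it takes a genuinely different route from the paper's. The paper proves this lemma by induction on $k$: it removes an edge lying on exactly one square, observes (via a figure) that this local move changes the number of links by four, and reduces to the tree case of Lemma \ref{class without squares}; your closing remark sketches essentially that argument. Your main argument instead counts the links globally: each vertex of degree $d$ contributes $4-d$ convex links (total $4V-2E$); each edge contributes one concave neck link for each of its two flanking white circles that is exterior, and the interior flanking circles are precisely the $k$ square centers, each absorbing four of the $2E$ incidences (total $2E-4k$); summing gives length $4(V-k)$, and the relation $V-E+k=1$ eliminates $V$. Notably, that relation is exactly Lemma \ref{euler formula}, which the paper proves (also via Euler's polyhedral formula) in the \emph{next} section for the area computation, so your route involves no circularity and in fact unifies the two sections, with the clean intermediate statement ``class $=$ vertices minus squares'' dropping out along the way. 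The trade-off: the paper's induction needs only local pictures plus the tacitly assumed facts that an edge lying on exactly one square exists and that removing it leaves a valid dual graph, whereas your count needs the global dictionary between the dual graph and the curve --- a quadrant of a black circle is a link exactly when there is no incident edge in that direction, and a white circle is interior exactly when it is the center of a square --- which is the ``delicate point'' you flag. Since that is the same kind of geometric fact the paper itself disposes of with figures (and asserts outright in the proof of Theorem \ref{tangle area}), the two proofs sit at comparable levels of rigor, and yours has the advantage of being non-inductive and of exhibiting the separate convex and concave link counts explicitly.
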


\begin{proof}
  We use induction on $k$.  The base case, with no squares, reduces to Lemma \ref{class without squares}.

  Suppose $k\geq 1$.  If we remove one edge which belongs to exactly one square of the dual graph, then we add 5 links and remove 1 to obtain the corresponding Tangle, a net change of 4 (see Figure \ref{class with squares proof}).  By induction, this Tangle has class $m-1-2(k-1)+1=m-2k+2$, and so the original Tangle has class $m-2k+1$.
\end{proof}

\begin{figure}
  \begin{tikzpicture}
    \path (0,0) node[draw,shape=circle,fill=gray] (a) {};
    \path ({sqrt(2)},0) node[draw,shape=circle,fill=gray] (b) {};
    \path ({sqrt(2)}, {sqrt(2)}) node[draw,shape=circle,fill=gray] (c) {};
    \path (0, {sqrt(2)}) node[draw,shape=circle,fill=gray] (d) {};
    \draw[line width=4pt,color=lightgray] (b) + (45:0.5) arc (225:135:0.5);
    \draw[line width=4pt] (b) + (45:0.5) arc (45:135:0.5);
    \draw[line width=4pt] (b) + (135:0.5) arc (315:45:0.5);
    \draw[line width=4pt] (c) + (225:0.5) arc (225:315:0.5);
    \draw (c) -- (d) -- (a) -- (b);
    \draw[dashed] (b) -- (c);
    \foreach \i in {0, 1, 2, 3} {
      \fill[white] ({1/sqrt(2)}, {1/sqrt(2)}) + ({45 + 90 * \i}:0.5) circle (1.5pt);
    }
    \foreach \i in {1, 2} {
      \fill[white] ({3/sqrt(2)}, {1/sqrt(2)}) + ({45 + 90 * \i}:0.5) circle (1.5pt);
    }
  \end{tikzpicture}
  \caption{Proof of Lemma \ref{class with squares}}
  \label{class with squares proof}
\end{figure}

We define the \textit{size} of a Tangle to be the size of its dual graph, i.e., the number of edges.

\begin{theorem}\label{edge bound}
  If a Tangle has class $c$, then its size is between $c-1$ and $\frac{c^2-1}{2}$ edges.
\end{theorem}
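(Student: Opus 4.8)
The plan is to read off both bounds from the two preceding lemmas together with a discrete isoperimetric estimate. First I would observe that Lemma \ref{class without squares} is exactly the $k=0$ case of Lemma \ref{class with squares}, so every Tangle with size $m$ and $k$ squares satisfies $c = m - 2k + 1$, that is, $m = c - 1 + 2k$. Since $k \ge 0$, this immediately gives the lower bound $m \ge c - 1$, with equality precisely when the dual graph is a tree; a chain of $c-1$ edges realizes it (it has class $c$ by Lemma \ref{class without squares}), so the lower bound is sharp.

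For the upper bound I must maximize the number of squares $k$, and the key is to translate the class into the perimeter of the dual graph. Because every chordless cycle is a unit square, the bounded faces of the dual graph, drawn in the plane, are exactly its $k$ squares, each bordered by $4$ edge-sides. Counting edge--face incidences, where each of the $m$ edges contributes two sides, gives $2m = 4k + B$, where $B$ is the number of edge-sides bordering the unbounded face; hence $B = 2m - 4k = 2(c-1)$. Thus the class is essentially half the outer perimeter.

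The remaining step is the discrete isoperimetric inequality. The unbounded face is bordered by a closed lattice walk of length $B = 2(c-1)$ that encloses all $k$ square faces. If the enclosed region has bounding box of width $W$ and height $H$ in cell units, then on one hand $k \le WH$, and on the other the walk must traverse at least $2W$ horizontal and $2H$ vertical unit steps, so $B \ge 2(W+H)$. Combining these with the AM--GM inequality yields $k \le WH \le \left(\tfrac{W+H}{2}\right)^2 \le \left(\tfrac{B}{4}\right)^2 = \tfrac{(c-1)^2}{4}$. Substituting back, $m = c - 1 + 2k \le c - 1 + \tfrac{(c-1)^2}{2} = \tfrac{c^2-1}{2}$, with equality for the square $\tfrac{c-1}{2}\times\tfrac{c-1}{2}$ grid of cells when $c$ is odd.

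I expect the isoperimetric step to be the main obstacle, chiefly in the bookkeeping. I would need to verify that single-cell holes and any tree-like appendages of the dual graph do not corrupt the identity $B = 2(c-1)$: a hole's edges border two bounded square faces and so count as interior, while an appendage edge is traversed twice by the outer walk and only weakens the estimate, so neither helps an extremal configuration. I would also confirm that the bounding-box inequality $B \ge 2(W+H)$ remains valid for a possibly self-touching closed walk. Everything else is routine algebra once $c = \tfrac{B}{2} + 1$ is in hand.
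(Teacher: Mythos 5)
Your route is genuinely different from the paper's: the paper obtains the upper bound by quoting the known extremal result that a polystick containing $k$ unit squares has at least $2k+2\sqrt{k}$ edges (OEIS A078633) and combining it with Lemma \ref{class with squares}, whereas you reprove that extremal fact from scratch via a perimeter-plus-isoperimetry argument. Your lower bound ($m=c-1+2k\geq c-1$ since $k\geq 0$) is clean and arguably more complete than the paper's one-line remark, and the algebra of your upper bound is correct: $k\leq\left(\frac{B}{4}\right)^2$ together with $m=c-1+2k$ does yield $m\leq\frac{c^2-1}{2}$. What your approach buys is self-containedness (no appeal to an external extremal result) and it exhibits the extremal configurations (square grids of cells); what the paper's buys is brevity.

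There is, however, a genuine gap as written, at the step where you assert that the bounded faces of the dual graph are exactly its $k$ squares ``because every chordless cycle is a unit square.'' That premise -- the paper's own informal ``in other words'' remark -- is literally false for valid dual graphs: the dual graph of the extremal class-$5$ Tangle is the full $2\times 2$ grid of cells ($9$ vertices, $12$ edges, $4$ squares), and its perimeter $8$-cycle is chordless, since every non-perimeter edge is incident to the center vertex, which does not lie on that cycle. So the chordless-cycle property fails precisely on the extremal examples your argument must handle, and it cannot support the identity $B=2(c-1)$. Fortunately, your proof needs only an inequality, not the identity: each of the $k$ squares is automatically a bounded face (nothing can be drawn inside a lattice cell) bordered by exactly $4$ edge-sides, and every other face contributes a non-negative number of edge-sides, so $2m\geq 4k+B$, i.e., $B\leq 2m-4k=2(c-1)$. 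The outer boundary walk of the (connected) dual graph still encloses all $k$ squares and still spans the bounding box, so the chain $k\leq WH\leq\left(\frac{W+H}{2}\right)^2\leq\left(\frac{B}{4}\right)^2\leq\frac{(c-1)^2}{4}$ survives unchanged and your conclusion follows. If you do want the face identity itself (all bounded faces of a Tangle's dual graph are single cells -- which is true), it needs a Jordan-curve argument rather than the chordless-cycle remark: a cell lying in a bounded face but not forming a square would have its white circle outside the Tangle, yet that circle would be separated from the unbounded exterior by the dual graph, which lies entirely inside the Tangle -- a contradiction, since the exterior of a simple closed curve is connected.
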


\begin{proof}
  A dual graph containing $k$ squares has at least $2k + 2\sqrt k$ edges \cite[A078633]{oeis}.  Or equivalently, a dual graph with $m$ edges has at most $\frac{m+1-\sqrt{2m+1}}{2}$ squares.  So by Lemma \ref{class with squares}, $c\geq\sqrt{2m+1}$, or $m\leq\frac{c^2-1}{2}$.

  On the other hand, if a dual graph with $m$ edges contains no squares, then $m=c-1$ by Lemma \ref{class without squares}.
\end{proof}

\section{Area}

We now compute the area enclosed by a Tangle, which depends only on its size.  Our result generalizes that of \cite{browne}, in which the only Tangles considered are those whose dual graphs are trees.

\begin{lemma}\label{euler formula}
  If a Tangle has a dual graph with $v$ vertices, $k$ squares, and $m$ edges, then $v+k-m=1$.
\end{lemma}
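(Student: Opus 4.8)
The plan is to recognize the identity $v+k-m=1$ as Euler's polyhedral formula in disguise and to derive it by treating the dual graph as a plane graph. Recall that Euler's formula reads $V-E+F=2$ for a connected plane graph, where $F$ counts all faces, including the unbounded one. I would set $E=m$ and $V=v$, so the task reduces to showing that the number of faces is $F=k+1$; the identity then follows by rearranging $v-m+(k+1)=2$.

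First I would argue that the dual graph is connected. Since a Tangle is a simple closed curve, its interior is a connected region by the Jordan curve theorem, and this region is the union of the black disks lying inside the Tangle together with the gaps separating kissing disks. Any two interior black circles can therefore be joined by a path through the interior, and such a path can be deformed to run along the centers of black circles, i.e.\ along dual edges; hence the dual graph is connected and Euler's formula applies.

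Next I would identify the faces. The unbounded face accounts for one, so it remains to show that the $k$ bounded faces are exactly the squares. This is where the structural observation made just before Lemma \ref{class without squares} does the work: a cycle in the dual graph corresponds to a cluster of black circles enclosed by the Tangle, so its interior must be ``filled in,'' and the only chordless cycles are squares. Consequently every bounded face is a unit square and every square is a bounded face, giving a bijection between bounded faces and the $k$ squares, so $F=k+1$.

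The main obstacle I anticipate is not the algebra, which is a one-line substitution, but justifying carefully that the bounded faces are in exact correspondence with the squares---in particular that no bounded face can be a larger chordless region and that the squares leave no holes. Both points rest on the geometric fact that an enclosed cycle of black circles forces every interior position to itself be a black interior circle; once this is granted, the count $F=k+1$ is secure and Euler's formula yields $v+k-m=1$. As an alternative that avoids appealing to Euler's formula, one could induct on $k$ in the style of the preceding lemmas: the base case $k=0$ is a tree, for which $v=m+1$ gives the result directly, and removing an edge belonging to exactly one square (as in the proof of Lemma \ref{class with squares}) decreases both $m$ and $k$ by one while leaving $v$ unchanged, preserving the quantity $v+k-m$.
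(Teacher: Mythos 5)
Your proposal is correct and follows essentially the same route as the paper: apply Euler's polyhedral formula to the dual graph viewed as a connected plane graph whose bounded faces are exactly the $k$ squares, so that $v-m+(k+1)=2$. You actually supply more detail than the paper does (the connectivity argument and the identification of bounded faces with squares, which the paper takes for granted), and your sketched inductive alternative would also work, but the core argument is identical.
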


\begin{proof}
  The dual graph is a planar graph with $k+1$ faces (the $k$ squares plus the external face).  So by Euler's polyhedral formula, we have
  \begin{equation*}
    v+k+1-m=2.\qedhere
  \end{equation*}
\end{proof}

\begin{theorem}\label{tangle area}
  If a Tangle with curvature $\frac{1}{r}$ has size $m$, then it encloses an area of $(4m + \pi)r^2$.
\end{theorem}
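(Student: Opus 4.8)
The plan is to compute the area by partitioning the enclosed region according to the circle packing and then invoking Euler's formula from Lemma \ref{euler formula}. Write $R$ for the region enclosed by the Tangle, and recall that in a square packing the complement of the union of all the open disks of the packing is a disjoint union of curvilinear \emph{holes}, each bounded by four mutually adjacent circles. Thus every point of $R$ lies either in one of the disks of the packing or in one of these holes, and I would split the area of $R$ accordingly into three kinds of pieces: the black disks lying inside the Tangle, the white disks lying inside, and the holes lying inside.

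First I would count each kind. The black disks inside the Tangle are, by definition, exactly the $v$ vertices of the dual graph. The white disks inside the Tangle are, I claim, exactly the $k$ centers of the squares: a white circle is enclosed precisely when the four black circles of its cell are all present and the cell is filled in as a square, which is the only way a chordless cycle can arise. Finally, the holes inside the Tangle should be in bijection with the $m$ edges. Each hole is centered at the midpoint of a unique pair of cardinally adjacent black circles, and the segment joining those two centers passes through the hole while meeting no other circle (its distance to the two flanking white circles exceeds $r$); by the definition of the dual graph this segment avoids the Tangle exactly when the two vertices are joined by an edge, which is exactly when the hole lies inside $R$.

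Next I would compute the three areas. Each disk has area $\pi r^2$. For a hole, the four surrounding centers are the vertices of a square of side $2r$ (the kissing distance), hence of area $4r^2$, and each of the four circles subtends a right angle there and so removes a quarter disk; thus the hole has area $4r^2-\pi r^2=(4-\pi)r^2$. Summing over the three kinds of pieces gives $\operatorname{Area}(R)=\pi r^2 v+\pi r^2 k+(4-\pi)r^2 m$. Applying Lemma \ref{euler formula} in the form $v+k=m+1$ collapses the two $\pi r^2$ terms, leaving $\operatorname{Area}(R)=\pi r^2(m+1)+(4-\pi)r^2 m=(4m+\pi)r^2$.

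The area computations are routine; the real work is the bookkeeping of the second paragraph, namely the three bijections, and in particular the assertions that an interior hole occurs exactly where the dual graph has an edge and an interior white disk exactly where it has a square. This is where I expect the main obstacle to lie, since it requires relating the global inside/outside structure of the Tangle to the local combinatorics of the packing. Once those identifications are secure, the formula follows immediately from Euler's formula.
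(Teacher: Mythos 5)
Your proposal is correct and follows essentially the same route as the paper: decompose the enclosed region into black disks (vertices), white disks (squares), and the astroid-like in-between regions (edges), compute each in-between region's area as $(4-\pi)r^2$, and collapse $\pi r^2(v+k)+(4-\pi)r^2m$ to $(4m+\pi)r^2$ via Lemma \ref{euler formula}. The correspondences you flag as the ``real work'' are exactly the ones the paper takes as immediate from the construction of the dual graph, so your extra justification of them is a refinement, not a departure.
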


\begin{proof}
  Recall that we constructed the dual graph of a Tangle by looking at black and white circles in a square circle packing of the plane.  The black circles inside the Tangle correspond to vertices of the dual graph, the white circles inside the Tangle correspond to squares in the dual graph, and the remaining ``in-between'' regions (which look like, but are not, astroids) correspond to edges.  Each ``in-between'' region has the same area as the figure obtained by removing an inscribed circle of radius $r$ from a square of side length $2r$, i.e., $(2r)^2 - \pi r^2 = (4-\pi)r^2$.

  Therefore, if the dual graph has $v$ vertices, $k$ squares, and  $m$ edges, then the area enclosed by the Tangle is
  \begin{align*}
    \pi r^2 v + \pi r^2 k + (4 - \pi) r^2 m &= (4m+(v+k-m)\pi)r^2\\
    &= (4m + \pi)r^2,
  \end{align*}
  by Lemma \ref{euler formula}.
\end{proof}

\section{Enumeration}

By Theorem \ref{edge bound}, we see that to enumerate all $4c$-Tangles, we may construct all possible dual graphs containing between $c-1$ and $\left\lfloor\frac{c^2-1}{2}\right\rfloor$ edges.  To do this, we construct all polysticks with the given number of sticks, and then remove any containing chordless cycles which are not squares.

The standard technique for constructing polysticks is due to Redelmeier \cite{redelmeier}.  Although his original method was specific to polyominoes, it is readily adapted to polysticks \cite{malkis}.  This gives us all fixed polysticks up to a given size.

We then check for chordless cycles which are not squares.  This may be accomplished by ensuring that all elements of a fundamental set of cycles are squares using, e.g., Paton's algorithm \cite{paton}.  After throwing out any polysticks not satisfying this property, we are left with exactly the dual graphs of Tangles.  For each dual graph, we compute the length of the corresponding Tangle using Lemma \ref{class with squares}.

Some of these may be equivalent under the eight transformations in the dihedral group of symmetries of the square.  We remove these redundancies to obtain the dual graphs of all the one-sided and free Tangles.

The author implemented the above process in Python \cite{python} using NetworkX \cite{networkx}, obtaining the results in Table \ref{enumeration results} \cite{tanglenum}.

Dual graphs which are trees have been well-studied, and are often known in the literature as \textit{bond trees} on the square lattice.  In particular, the main diagonals of Tables \ref{fixed enumeration} \cite{gstw} and \ref{free enumeration} \cite[A056841]{oeis} are already known.

\begin{table}[p]
  \begin{subtable}{\linewidth}
    \centering
    \begin{tabular}{|r||rrrrrrrrrrr|}
      \hline
      \backslashbox{$m$}{$c$} & 1 & 2 & 3 & 4 & 5 & 6 & 7 & 8 & 9 & 10 & 11 \\
      \hline\hline
      0 & 1 & & & & & & & & & & \\
      1 & & 2 & & & & & & & & & \\
      2 & & & 6 & & & & & & & & \\
      3 & & & & 22 & & & & & & & \\
      4 & & & 1 & & 87 & & & & & & \\
      5 & & & & 8 & & 364 & & & & & \\
      6 & & & & & 52 & & 1574 & & & & \\
      7 & & & & 2 & & 304 & & 6986 & & & \\
      8 & & & & & 22 & & 1706 & & 31581 & & \\
      9 & & & & & & 182 & & 9312 & & 144880 & \\
      10 & & & & & 6 & & 1288 & & 50056 & & 672390 \\
      \hline
    \end{tabular}
    \caption{Fixed Tangles of size $m$ and class $c$}
    \label{fixed enumeration}
  \end{subtable}
  \begin{subtable}{\linewidth}
    \centering
    \begin{tabular}{|r||rrrrrrrrrrr|}
      \hline
      \backslashbox{$m$}{$c$} & 1 & 2 & 3 & 4 & 5 & 6 & 7 & 8 & 9 & 10 & 11 \\
      \hline\hline
      0 & 1 & & & & & & & & & & \\
      1 & & 1 & & & & & & & & & \\
      2 & & & 2 & & & & & & & & \\
      3 & & & & 7 & & & & & & & \\
      4 & & & 1 & & 24 & & & & & & \\
      5 & & & & 2 & & 97 & & & & & \\
      6 & & & & & 14 & & 401 & & & & \\
      7 & & & & 1 & & 76 & & 1772 & & & \\
      8 & & & & & 6 & & 432 & & 7930 & & \\
      9 & & & & & & 49 & & 2328 & & 36335 & \\
      10 & & & & & 2 & & 326 & & 12534 & & 168249 \\
      \hline
    \end{tabular}
    \caption{One-sided Tangles of size $m$ and class $c$}
    \label{one-sided enumeration}
  \end{subtable}
  \begin{subtable}{\linewidth}
    \centering
    \begin{tabular}{|r||rrrrrrrrrrr|}
      \hline
      \backslashbox{$m$}{$c$} & 1 & 2 & 3 & 4 & 5 & 6 & 7 & 8 & 9 & 10 & 11 \\
      \hline\hline
      0 & 1 & & & & & & & & & & \\
      1 & & 1 & & & & & & & & & \\
      2 & & & 2 & & & & & & & & \\
      3 & & & & 5 & & & & & & & \\
      4 & & & 1 & & 15 & & & & & & \\
      5 & & & & 1 & & 54 & & & & & \\
      6 & & & & & 9 & & 212 & & & & \\
      7 & & & & 1 & & 38 & & 908 & & & \\
      8 & & & & & 4 & & 224 & & 4011 & & \\
      9 & & & & & & 28 & & 1164 & & 18260 & \\
      10 & & & & & 2 & & 170 & & 6299 & & 84320 \\
      \hline
    \end{tabular}
    \caption{Free Tangles of size $m$ and class $c$}
    \label{free enumeration}
  \end{subtable}
  \caption{Tangle enumeration results}
  \label{enumeration results}
\end{table}

\section{Polyominoes}

A \textit{polyomino} is a well-known combinatorial object consisting of a collection of squares of fixed area (called \textit{cells}) glued together along their edges.  We will see that there is a connection between Tangles and polyominoes.  As Tangles do not have holes, we restrict our attention to polyominoes without holes.  The boundaries of hole-free polyominoes are also known as \textit{self-avoiding polygons}.

Previous authors \cite{chan,fleron1} have drawn comparisons between Tangles and polyominoes.  We make their observations precise and find relationships between the length and area of the a Tangle and the perimeters and areas of the corresponding polyominoes.

The \textit{Chan polyomino} of a given Tangle is the polyomino obtained by adding a cell for every vertex of the dual graph so that if two vertices are adjacent, then the corresponding cells are adjacent.  Note that the converse is not true; if two cells in the Chan polyomino are adjacent, then the corresponding vertices in the dual graph may not be.  Furthermore, this is not a one-to-one correspondence, as different Tangles may have the same Chan polyomino.  However, every hole-free polyomino is the Chan polyomino of at least one Tangle.

The \textit{Fleron polyomino} of a given Tangle is the polyomino obtained by adding a cell for every vertex and edge of the dual graph so that if an edge is incident to a vertex, then the corresponding cells are adjacent.  Additional cells are then added to fill in all the holes corresponding to squares in the dual graph.  This is a one-to-one correspondence, but is not onto.  Indeed, a straight polyomino (in which all cells are arranged in a line) is only the Fleron polyomino of a Tangle if it contains an odd number of cells.

See Figure \ref{chan and fleron polyominoes} for an example of each of these polyominoes.

\begin{figure}
  \begin{tikzpicture}[scale=0.75]
    \coordinate (tangle_base) at ({0.5 + cos(135)}, {0.5 + sin(135)});
    \coordinate (chan_base) at ({2+2*sqrt(2)}, {sqrt(2)-1});
    \coordinate (fleron_base) at ($ (chan_base) + (4, -1) $);
    \draw[line width=4pt] (tangle_base) + (45:0.5)
    arc (225:45:0.5)
    arc (225:315:0.5) arc (-45:135:0.5) arc (315:45:0.5) arc (225:315:0.5)
    arc (135:-45:0.5) arc (135:315:0.5) arc (135:-135:0.5) arc (45:135:0.5)
    arc (135:225:0.5) arc (45:-135:0.5) arc (45:135:0.5) arc (315:135:0.5)
    arc (-45:45:0.5);
    \draw[ultra thick] (chan_base) -- ++ (2,0) -- ++ (0,1) -- ++ (1,0)
    -- ++ (0,1) -- ++ (-1,0) -- ++ (0,1) -- ++ (-2,0) -- ++ (0,-3);
    \draw[ultra thick] ($ (chan_base) + (1,0) $) -- ++ (0,3);
    \draw[ultra thick] ($ (chan_base) + (0,1) $) -- ++ (2,0) -- ++ (0,1)
    -- ++ (-2,0);
    \draw[ultra thick] (fleron_base) -- ++ (3,0) -- ++ (0,2) -- ++ (2,0)
    -- ++ (0,1) -- ++ (-2,0) -- ++ (0,2) -- ++ (-3,0) -- ++ (0,-1)
    -- ++ (2,0) -- ++ (0,-1) -- ++ (-2,0) -- ++ (0,-3);
    \draw[ultra thick] ($ (fleron_base) + (1,0) $) -- ++ (0,3);
    \draw[ultra thick] ($ (fleron_base) + (2,0) $) -- ++ (0,3);
    \draw[ultra thick] ($ (fleron_base) + (0,1) $) -- ++ (3,0);
    \draw[ultra thick] ($ (fleron_base) + (0,2) $) -- ++ (3,0);
    \draw[ultra thick] ($ (fleron_base) + (3,2) $) -- ++ (0,1) -- ++ (-1,0);
    \draw[ultra thick] ($ (fleron_base) + (3,4) $) -- ++ (-1,0) -- ++ (0,1);
    \draw[ultra thick] ($ (fleron_base) + (1,4) $) -- ++ (0,1);
    \draw[ultra thick] ($ (fleron_base) + (4,2) $) -- ++ (0,1);
    \foreach \x in {1, 2}
    \foreach \i in {0, 1, 2, 3} {
      \fill[white] ({0.5 + cos(135) + (2*\x - 1)/sqrt(2)}, {0.5 + sin(135) + 3/2*sqrt(2)}) + ({45 + 90 * \i}:0.5) circle (1.5pt);
    }
    \foreach \i in {0, 1, 2, 3} {
      \fill[white] ({0.5 + cos(135) + 5/sqrt(2)}, {0.5 + sin(135) + 1/2*sqrt(2)}) + ({45 + 90 * \i}:0.5) circle (1.5pt);
    }
    \foreach \i in {0, 1, 3} {
      \fill[white] ({0.5 + cos(135) + 3/sqrt(2)}, {0.5 + sin(135) + 1/2*sqrt(2)}) + ({45 + 90 * \i}:0.5) circle (1.5pt);
    }
    \foreach \i in {0, 1, 2} {
      \fill[white] ({0.5 + cos(135) + 1/sqrt(2)}, {0.5 + sin(135) + 1/2*sqrt(2)}) + ({45 + 90 * \i}:0.5) circle (1.5pt);
    }
    \foreach \i in {0, 2, 3} {
      \fill[white] ({0.5 + cos(135) + 3/sqrt(2)}, {0.5 + sin(135) - 1/2*sqrt(2)}) + ({45 + 90 * \i}:0.5) circle (1.5pt);
    }
    \foreach \i in {1, 2, 3} {
      \fill[white] ({0.5 + cos(135) + 1/sqrt(2)}, {0.5 + sin(135) - 1/2*sqrt(2)}) + ({45 + 90 * \i}:0.5) circle (1.5pt);
    }
  \end{tikzpicture}
  \caption{The 24-Tangle from Figure \ref{circle packing} with its Chan and Fleron polyominoes}
  \label{chan and fleron polyominoes}
\end{figure}
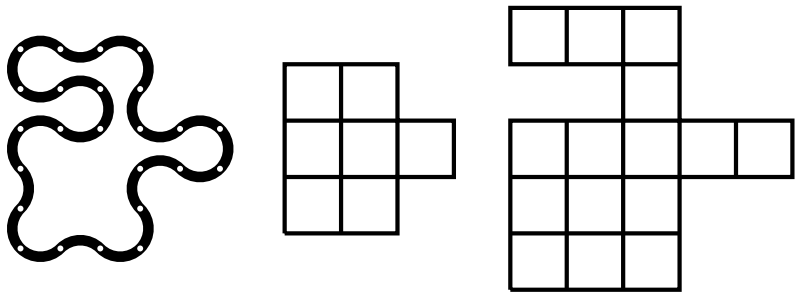

\begin{lemma}\label{chan area}
 Any hole-free polyomino with $m+1$ cells is the Chan polyomino of a Tangle with size $m$.
\end{lemma}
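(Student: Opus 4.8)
The plan is to realize the given polyomino as a Chan polyomino by taking a spanning tree of its cell-adjacency graph. The guiding observation is a count. Because the Chan polyomino has one cell per vertex of the dual graph, any Tangle whose Chan polyomino is the given polyomino $P$ must have a dual graph on exactly $m+1$ vertices; and if that dual graph is additionally to have size $m$, then Lemma \ref{euler formula} gives $(m+1)+k-m=1$, forcing the square count $k$ to vanish. So the dual graph we want must be a tree. This is no obstacle, since a tree has no cycles and therefore vacuously meets the requirement that the only chordless cycles of a dual graph be squares; hence every tree is the dual graph of an actual Tangle.

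With this in hand, I would construct the tree explicitly. Let $G$ be the graph whose vertices are the $m+1$ cells of $P$, sitting at their cell centers, with an edge joining two vertices whenever the two cells share a side. As $P$ is a polyomino, $G$ is connected and so possesses a spanning tree $T$. Viewed as a polystick, $T$ has $m+1$ vertices and thus $m$ edges, and by the preceding paragraph it is the dual graph of some Tangle $\mathcal{T}$. Since the size of a Tangle is the edge count of its dual graph, $\mathcal{T}$ has size $m$; as a consistency check, Lemma \ref{class without squares} assigns it class $m+1$. (Different spanning trees yield different Tangles, reflecting the fact that a single Chan polyomino may arise from many Tangles.)

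The remaining task is to confirm that the Chan polyomino of $\mathcal{T}$ is exactly $P$. The vertices of $T$ occupy the cell centers of $P$, so the cells that the Chan construction attaches --- one to each vertex --- are precisely the cells of $P$, and two of them abut exactly when their grid positions are adjacent, which is the adjacency of $P$ itself. The feature that makes this go through is that the Chan correspondence runs only one way: adjacent vertices force adjacent cells, but adjacent cells need not come from adjacent vertices. Consequently the edges of $G$ thrown away in forming the spanning tree $T$ --- neighboring cells of $P$ left without a connecting stick --- survive as neighboring cells of the Chan polyomino, so passing to $T$ subtracts sticks without subtracting cells.

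The heart of the matter, and the step I expect to require the most care, is this last verification: that thinning $G$ to a spanning tree changes the dual graph while leaving the Chan polyomino fixed. This is exactly where the one-directional nature of the Chan map earns its keep, for adjacency of cells is determined by their grid positions rather than by the surviving sticks; discarding the redundant edges pushes the edge count down to the minimal value $m$ without altering which cells are present, and hence without altering $P$.
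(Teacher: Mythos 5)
Your proof is correct and takes essentially the same approach as the paper: build the cell-adjacency graph of the polyomino, pass to a spanning tree, and note that this tree is the dual graph of a Tangle whose Chan polyomino is the given polyomino. Your extra verifications (that a tree automatically satisfies the chordless-cycle condition for dual graphs, and that deleting edges of the adjacency graph leaves the Chan polyomino unchanged) just make explicit what the paper's three-sentence proof leaves implicit.
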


\begin{proof}
  First construct a dual graph by assigning to each cell of the polyomino a vertex and drawing edges when corresponding cells are adjacent.  Then delete edges to obtain a spanning tree.  This tree has $m$ edges, and by construction, it corresponds to a Tangle with the desired Chan polyomino.
\end{proof}

\begin{lemma}\label{fleron area}
 If a Tangle has size $m$, then its Fleron polyomino has $2m+1$ cells.
\end{lemma}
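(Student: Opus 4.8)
The plan is to count the cells of the Fleron polyomino by type and then apply Lemma \ref{euler formula}. By construction, the Fleron polyomino contributes one cell for each vertex of the dual graph and one cell for each edge; so if the dual graph has $v$ vertices, $m$ edges, and $k$ squares, there are $v + m$ such cells before any holes are filled. The remaining cells are exactly those added to fill in the holes, so what I need to show is that filling the holes requires precisely $k$ additional cells, one for each square.

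To see this, I would set up natural coordinates for the Fleron polyomino: place the vertex cells at the even lattice points $(2i, 2j)$ and each edge cell at the midpoint $(2i \pm 1, 2j)$ or $(2i, 2j \pm 1)$ between the two vertices it joins, so that incident vertex and edge cells are adjacent. A square of the dual graph then corresponds to four vertex cells at the corners of a $3 \times 3$ block of cells together with the four edge cells on its sides, leaving exactly the center cell $(2i+1, 2j+1)$ empty. This is the hole to be filled, and a single cell fills it. Distinct squares have distinct centers, so these hole-filling cells are distinct; moreover, since the only chordless cycles of a dual graph are squares, there are no larger holes to account for. Hence exactly $k$ cells are needed.

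Combining the counts, the Fleron polyomino has $v + m + k$ cells. By Lemma \ref{euler formula} we have $v + k = m + 1$, and therefore the total is $(m + 1) + m = 2m + 1$, as claimed. The only step requiring real care is the hole analysis of the previous paragraph---verifying that each square leaves a single central hole and that no other holes arise---while the final count then follows immediately from Euler's formula.
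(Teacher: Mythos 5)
Your proof is correct, but it takes a genuinely different route from the paper's. The paper argues by induction on the number of squares $k$ in the dual graph: the base case is a tree (where $v=m+1$ gives $m+(m+1)=2m+1$ cells immediately), and the inductive step deletes an edge lying on exactly one square, observing that this removes exactly two cells from the Fleron polyomino (the edge cell and the hole-filling cell). You instead count cells directly as $v+m+k$ --- one per vertex, one per edge, one per square --- and invoke Lemma \ref{euler formula} to collapse $v+k$ to $m+1$. Both arguments ultimately rest on the same geometric fact, that each square of the dual graph contributes exactly one hole-filling cell and no other holes arise; in your version this is the explicit $3\times 3$ coordinate analysis together with the observation that all chordless cycles are squares, while in the paper's version it is absorbed into the local picture of the inductive step. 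Your approach has the virtue of reusing Lemma \ref{euler formula} (which the paper proves anyway) and of exactly paralleling the paper's own proof of Theorem \ref{tangle area}, where vertices, squares, and edges play the same three roles; the paper's induction, by contrast, avoids any global hole analysis, at the cost of the implicit claims that a dual graph with $k\geq 1$ squares has an edge on exactly one square and that deleting it yields another valid dual graph.
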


\begin{proof}
  We use induction on the number of squares in the dual graph.  If there are no squares, then the dual graph is a tree with $m$ edges and $m+1$ vertices.  Since there are no squares, there are then $2m+1$ cells in the Fleron polyomino.

  Now suppose we have a dual graph with $m$ edges and at least one square.  By induction, removing one edge belonging to exactly one square results in a dual graph corresponding to a polyomino with $2(m-1)+1=2m-1$ cells.  But by removing this one edge, we have removed two cells from the polyomino corresponding to the original dual graph -- one corresponding to the edge itself and one corresponding to the hole in the middle of the square.  Therefore, the polyomino had $2m+1$ cells.
\end{proof}

\begin{lemma}\label{chan perimeter}
  Any hole-free polyomino with perimeter $2(c+1)$ is the Chan polyomino of a Tangle of length $4c$.
\end{lemma}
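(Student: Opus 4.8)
The plan is to build the dual graph directly from the polyomino's full cell-adjacency structure, rather than passing to a spanning tree as in the proof of Lemma~\ref{chan area}, and then to recover the class from Lemmas~\ref{euler formula} and~\ref{class with squares}. So let $P$ be a hole-free polyomino with perimeter $2(c+1)$, and write $N$ for its number of cells. First I would form the graph $G$ with one vertex per cell of $P$ and an edge joining two vertices exactly when their cells share a side. This $G$ is connected because $P$ is edge-connected, and its Chan polyomino is visibly $P$ again: placing a cell at each vertex of $G$ reproduces $P$, since the edges of $G$ were defined to match precisely the adjacencies of $P$.

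The crux is to check that $G$ really is the dual graph of a Tangle, i.e.\ that its only chordless cycles are squares ($2\times 2$ blocks of cells). This is exactly where the hole-free hypothesis enters, and I expect it to be the main obstacle, since it is the point at which the topological ``no holes'' condition must be turned into the combinatorial statement about chordless cycles. The argument I have in mind is that any cycle in $G$ bounds a region of the plane; because $P$ has no holes, every cell enclosed by that region already belongs to $P$, so a cycle of length greater than $4$ must strictly enclose at least one cell, whose four sides then supply chords. Hence a chordless cycle can enclose no cell and must be a single $2\times 2$ block; conversely each such block is a chordless $4$-cycle. Thus the squares of $G$ are exactly the $2\times 2$ blocks of $P$, and $G$ is a legitimate dual graph.

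It then remains to compute the class, which is routine. Counting cell sides with multiplicity gives $4N = 2m + 2(c+1)$, where $m$ is the number of edges of $G$ (each interior side is shared by two cells) and $2(c+1)$ is the perimeter; hence $m = 2N-(c+1)$. Applying Lemma~\ref{euler formula} with $v = N$ vertices shows that $G$ has $k = m - N + 1 = N - c$ squares. Finally Lemma~\ref{class with squares} gives the class as $m - 2k + 1 = \bigl(2N-(c+1)\bigr) - 2(N-c) + 1 = c$, so the Tangle determined by $G$ has length $4c$ and Chan polyomino $P$, as required.
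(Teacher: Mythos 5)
Your overall strategy---take the full adjacency graph $G$ of $P$, verify it is a dual graph, and then obtain the class by pure counting---is genuinely different from the paper's proof, which instead inducts on the number of cells, stripping off leaf cells and performing $\Omega$-rotations on corner cells. Your counting step is correct and clean: counting sides of cells gives $m=2N-(c+1)$, Lemma~\ref{euler formula} gives $k=m-N+1=N-c$ squares, and Lemma~\ref{class with squares} gives class $m-2k+1=c$. If the middle step were sound, this would be an attractive, induction-free argument.

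But the middle step contains a genuine error: it is \emph{not} true that every chordless cycle of $G$ is a unit square. Take $P$ to be the $3\times 3$ square polyomino (hole-free, perimeter $12$): the eight outer cells form a cycle of length $8$ in $G$, and no two non-consecutive cells among those eight are adjacent, so this cycle is chordless yet is not a square. Your argument breaks precisely where you say the enclosed cell's ``four sides then supply chords'': a chord must join two vertices \emph{on} the cycle, whereas the four edges issuing from an enclosed cell each have one endpoint at that interior vertex, so they are not chords. (Separately, the claim that any cycle of length greater than $4$ strictly encloses a cell is also false---the $6$-cycle around a $1\times 2$ block encloses no cell center---although in that case a chord does exist.) What you actually need to verify is the condition the paper describes as cycles being ``filled in'' with squares: every bounded face of the plane graph $G$ must be a $2\times 2$ block, equivalently every cycle of $G$ encloses only unit-square faces. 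That statement \emph{is} true for the full adjacency graph of a hole-free polyomino, and it is where hole-freeness genuinely enters: if a cell bordering a bounded face of $G$ were missing from $P$, that face would extend into the complementary component of $P$ containing the missing cell, and by hole-freeness that component is unbounded, contradicting boundedness of the face. With that corrected criterion and argument in place, your Euler-formula computation goes through as written, since the $k$ appearing in Lemmas~\ref{euler formula} and~\ref{class with squares} counts exactly these bounded square faces.
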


\begin{proof}
  We use induction on the number of cells in the polyomino.  For the base case, a monomino, which has perimeter $4=2(1 + 1)$, is the Chan polyomino of a circle, which has length $4=4\cdot 1$.

  Now consider a hole-free polyomino with at least two cells and a perimeter of $2(c+1)$.  Construct the dual graph of a Tangle by adding a vertex for each cell and an edge where the corresponding cells are adjacent.

  If there exists a cell which is adjacent to only one other cell, then remove it.  The resulting polyomino has perimeter $2c$, since we removed three exterior edges but a formerly interior edge has been exposed to become an exterior edge.  When we remove the corresponding vertex and edge from the dual graph, we reduce the length of the Tangle by 4 as in Lemma \ref{class without squares}.  So by induction, the original Tangle has length $4(c-1)+4=4c$.

  If no such cell exists, then we remove a ``corner'' cell which is adjacent to exactly two other cells.  The resulting polyomino also has perimeter $2(c+1)$, as the two exterior edges which were removed are replaced by two formerly interior edges.  One vertex and two edges are removed in the corresponding dual graph, corresponding to the removal of four links which are offset by the addition of four more.  See Figure \ref{chan perimeter proof}.  This operation is known as an $\Omega$-rotation in the literature.  We can then continue to remove corner cells until there exists a cell adjacent to only one other cell and then proceed with our induction as above.
  \end{proof}

  \begin{figure}
  \begin{tikzpicture}
    \path (0,0) node[draw,shape=circle,fill=gray] (a) {};
    \path ({sqrt(2)},0) node[draw,shape=circle,fill=gray] (b) {};
    \path ({sqrt(2)}, {sqrt(2)}) node[draw,shape=circle,fill=gray] (c) {};
    \path (0, {sqrt(2)}) node[draw,shape=circle,fill=gray] (d) {};
    \draw[line width=4pt] (d) + (45:0.5) arc (45:-45:0.5) arc (135:315:0.5)
    arc (135:45:0.5);
    \draw[line width=4pt,lightgray] (d) + (45:0.5) arc (225:315:0.5) arc (135:-45:0.5)
    arc (135:215:0.5);
    \draw[line width=4pt] (d) + (45:0.5) arc (45:-45:0.5) arc (135:315:0.5)
    arc (135:45:0.5);
    \draw (d) -- (a) -- (b);
    \draw[dashed] (b) -- (c) -- (d);
    \fill[white] (a) + (45:0.5) circle (1.5pt);
    \foreach \i in {0, 1} {
      \fill[white] (b) + ({45 + 90 * \i}:0.5) circle (1.5pt);
    }
    \foreach \i in {0, 1, 3} {
      \fill[white] (c) + ({45 + 90 * \i}:0.5) circle (1.5pt);
    }
    \foreach \i in {0, 3} {
      \fill[white] (d) + ({45 + 90 * \i}:0.5) circle (1.5pt);
    }

  \end{tikzpicture}
  \caption{Proof of Lemma \ref{chan perimeter}}
  \label{chan perimeter proof}
\end{figure}

\begin{lemma}\label{fleron perimeter}
  If a Tangle has length $4c$, then its Fleron polyomino has perimeter $4c$.
\end{lemma}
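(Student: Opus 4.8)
The plan is to induct on the number $k$ of squares in the dual graph, mirroring the inductions in Lemmas \ref{class with squares} and \ref{fleron area}. Throughout I will use the elementary identity $P = 4N - 2A$ for the perimeter $P$ of a polyomino with $N$ cells and $A$ pairs of edge-adjacent cells: each of the $4N$ unit cell-edges is either shared by two cells, contributing nothing to $P$, or lies on the boundary. By Lemma \ref{fleron area} the Fleron polyomino has $N = 2m+1$ cells, where $m$ is the size, and by Lemma \ref{class with squares} a dual graph with $m$ edges and $k$ squares satisfies $c = m - 2k + 1$; the goal is therefore to show $P = 4(m - 2k + 1)$.

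For the base case $k = 0$, the dual graph is a tree with $m$ edges and $m+1$ vertices. I would first observe that the Fleron polyomino is then the \emph{subdivision} of this tree: placing a vertex-cell at each dual-graph vertex and an edge-cell at the midpoint of each edge, after scaling so adjacent vertices are two cells apart, one checks that a cell is edge-adjacent to another precisely when the corresponding vertex and incident edge are. In particular no two edge-cells touch and no non-incident vertex/edge pair touches, so the cell-adjacency graph is again a tree and $A = N - 1$. Hence $P = 4N - 2(N-1) = 2N + 2 = 2(2m+1) + 2 = 4(m+1)$, which equals $4c$ since $c = m+1$ when $k = 0$ by Lemma \ref{class without squares}.

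For the inductive step, take $k \geq 1$ and delete an edge belonging to exactly one square, as in Lemmas \ref{class with squares} and \ref{fleron area}. This produces the Fleron polyomino of a dual graph with $m-1$ edges and $k-1$ squares, hence of class $c+1$; by induction its perimeter is $4(c+1) = 4c + 4$. As noted in Lemma \ref{fleron area}, passing from the original to this smaller polyomino deletes exactly two cells -- the edge-cell and the fill-cell at the center of the $3\times 3$ block coming from the square (see Figure \ref{class with squares proof}). Writing $\Delta N, \Delta A, \Delta P$ for the changes in this deletion, we have $\Delta N = -2$, so $\Delta P = 4\Delta N - 2\Delta A = -8 - 2\Delta A$, and it remains to compute $\Delta A$.

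The crux -- and the step I expect to be the main obstacle -- is verifying that exactly six cell-adjacencies are destroyed, i.e. $\Delta A = -6$. The fill-cell sits at a lattice position with \emph{both} coordinates odd, and such positions are occupied only by fill-cells of squares; since the deleted edge lies in exactly one square, the fill-cell meets no cell on the far side of the deleted edge and is adjacent only to the four edge-cells of its own square. Likewise the deleted edge-cell is adjacent only to its two endpoint vertex-cells and to the fill-cell. Together these account for exactly six interior edges internal to the $3 \times 3$ block (the edge-cell's three adjacencies, plus the fill-cell's three adjacencies to the remaining edge-cells), and no cell outside the block is touched, so $\Delta A = -6$. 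Then $\Delta P = -8 - 2(-6) = 4$, so the smaller polyomino has perimeter four \emph{more} than the original; consequently the original perimeter is $(4c+4) - 4 = 4c$, completing the induction.
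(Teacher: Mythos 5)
Your proof is correct, but it takes a genuinely different route from the paper. The paper's own proof is a one-line geometric observation: by the construction of the Fleron polyomino, every link of the Tangle corresponds to exactly one exterior unit edge of the polyomino, so a Tangle of length $4c$ immediately yields perimeter $4c$. You instead give a purely combinatorial argument on the lattice, inducting on the number of squares and using the identity $P = 4N - 2A$ together with Lemmas \ref{class without squares}, \ref{class with squares}, and \ref{fleron area}; your parity analysis of cell positions (vertex-cells at even--even positions, edge-cells with one odd coordinate, fill-cells at odd--odd positions) correctly pins down all adjacencies, and the counts $\Delta N = -2$, $\Delta A = -6$, $\Delta P = +4$ in the inductive step check out. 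What your approach buys is self-containedness: it never invokes the geometric correspondence between quarter-circle links and boundary edges, only the combinatorics of the dual graph, and it inherits exactly the same implicit assumptions as the paper's own inductions (existence of an edge lying in exactly one square, and validity of the dual graph after its deletion). What it costs is length; in fact your parity analysis shows the induction is unnecessary, since it gives the global adjacency count $A = 2m + 4k$ directly, whence $P = 4N - 2A = 4(2m+1) - 2(2m+4k) = 4(m-2k+1) = 4c$ by Lemma \ref{class with squares}, with no induction at all. The paper's proof buys brevity, at the price of leaving the link-to-edge bijection as an assertion the reader must visualize.
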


\begin{proof}
  This is clear by definition.  Indeed, every link in a Tangle corresponds to exactly one exterior edge of its Fleron polyomino.
\end{proof}

\section{Growth constants}

Suppose $a_p(m)$ is the number of fixed hole-free polyominoes with $m$ cells and $\ell_p(c)$ is the number of fixed hole-free polyominoes with perimeter $2c$.

It is well-known that the limits
\begin{equation*}
  \kappa_p = \lim_{m\to\infty}a_p(m)^{1/m}\text{ and }
  \mu_p = \lim_{c\to\infty}\ell_p(c)^{1/c}
\end{equation*}
exist.  These are known as \textit{growth constants} or \textit{connective constants}, and it is estimated that $\kappa_p\approx  3.97094397$ \cite{gjwe} and $\mu_p\approx 2.63815853035$ \cite{clisbyjensen}.

In this section, we will investigate the existence of corresponding growth constants for Tangles.

For each positive integer $m$, we define $a_0(m)$, $a_1(m)$, and $a_2(m)$ to be the number of distinct fixed, one-sided, and free Tangles, respectively, with size $m$, or equivalently by Theorem \ref{tangle area}, area $(4m+\pi)r^2$.

Similarly, for each positive integer $c$, we define $\ell_0(c)$, $\ell_1(c)$, and $\ell_2(c)$ to be the number of distinct fixed, one-sided, and free Tangles, respectively, with class $c$, or equivalently, length $4c$.

\begin{lemma}\label{liminf and limsup bounds}
  The following inequalities are true.
  \begin{align*}
    \kappa_p\leq\liminf_{m\to\infty}a_0(m)^{1/m}\quad &
    \limsup_{m\to\infty}a_0(m)^{1/m}\leq\kappa_p^2\\
    \mu_p\leq\liminf_{c\to\infty}\ell_0(c)^{1/c}\quad &
    \limsup_{c\to\infty}\ell_0(c)^{1/c}\leq\mu_p^2\quad
  \end{align*}
\end{lemma}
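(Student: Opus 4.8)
The plan is to derive all four inequalities from the counting consequences of the four correspondence lemmas of \S5, combined with the known existence of the polyomino growth constants $\kappa_p$ and $\mu_p$. The two lower bounds will come from the Chan polyomino, whose map from Tangles is surjective onto hole-free polyominoes; the two upper bounds will come from the Fleron polyomino, whose map is injective. In each case the inequality between counts is promoted to an inequality between $\liminf$ or $\limsup$ by taking roots and rewriting the exponent so that the known limits $a_p(n)^{1/n}\to\kappa_p$ and $\ell_p(n)^{1/n}\to\mu_p$ can be applied. Throughout I would note that both the Chan and Fleron constructions commute with translation, so a fixed Tangle yields a fixed polyomino and the comparisons are genuinely between the fixed counts $a_0,\ell_0$ and $a_p,\ell_p$.

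For the lower bounds, first I would observe that by Lemma \ref{chan area} every hole-free polyomino with $m+1$ cells arises as the Chan polyomino of at least one Tangle of size $m$. Since each Tangle has a unique Chan polyomino, choosing one Tangle per polyomino embeds the hole-free polyominoes with $m+1$ cells into the Tangles of size $m$, giving
\begin{equation*}
  a_p(m+1)\leq a_0(m).
\end{equation*}
The identical argument with Lemma \ref{chan perimeter} gives $\ell_p(c+1)\leq\ell_0(c)$. Taking $m$-th (resp.\ $c$-th) roots and writing
\begin{equation*}
  a_p(m+1)^{1/m}=\left(a_p(m+1)^{1/(m+1)}\right)^{(m+1)/m},
\end{equation*}
the right-hand side tends to $\kappa_p^{1}=\kappa_p$ because $(m+1)/m\to 1$; hence $\liminf_{m\to\infty}a_0(m)^{1/m}\geq\kappa_p$, and likewise $\liminf_{c\to\infty}\ell_0(c)^{1/c}\geq\mu_p$.

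For the upper bounds I would use that the Fleron polyomino gives an injection from Tangles to hole-free polyominoes. By Lemma \ref{fleron area} a Tangle of size $m$ has a Fleron polyomino with $2m+1$ cells, so distinct Tangles of size $m$ give distinct hole-free polyominoes with $2m+1$ cells, whence
\begin{equation*}
  a_0(m)\leq a_p(2m+1).
\end{equation*}
Similarly, by Lemma \ref{fleron perimeter} a Tangle of class $c$ has Fleron polyomino of perimeter $4c=2(2c)$, so $\ell_0(c)\leq\ell_p(2c)$. Now writing
\begin{equation*}
  a_p(2m+1)^{1/m}=\left(a_p(2m+1)^{1/(2m+1)}\right)^{(2m+1)/m}
\end{equation*}
and using $(2m+1)/m\to 2$ yields $\limsup_{m\to\infty}a_0(m)^{1/m}\leq\kappa_p^2$; the analogous computation $\ell_p(2c)^{1/c}=\left(\ell_p(2c)^{1/(2c)}\right)^{2}\to\mu_p^2$ gives $\limsup_{c\to\infty}\ell_0(c)^{1/c}\leq\mu_p^2$.

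The steps are individually routine, so there is no single deep obstacle; the care lies in getting the two directions right. The lower bounds require the \emph{surjectivity} of the Chan correspondence (every hole-free polyomino is realized), while the upper bounds require the \emph{injectivity} of the Fleron correspondence (distinct Tangles are never identified). The only analytic subtlety is the exponent bookkeeping: because the cell or perimeter count of the associated polyomino is an affine function of $m$ (or $c$)---namely $m+1$ and $2m+1$ for area, $c+1$ and $2c$ for length---the ratio of exponents converges to $1$ in the lower-bound estimates and to $2$ in the upper-bound estimates, which is precisely the source of the $\kappa_p$-versus-$\kappa_p^2$ (and $\mu_p$-versus-$\mu_p^2$) gap.
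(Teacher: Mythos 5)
Your proposal is correct and follows essentially the same route as the paper: both establish the sandwich $a_p(m+1)\leq a_0(m)\leq a_p(2m+1)$ (and its length analogue $\ell_p(c+1)\leq\ell_0(c)\leq\ell_p(2c)$) from Lemmas \ref{chan area}--\ref{fleron perimeter}, then take roots and rewrite the exponents so the known polyomino limits give the factors $\kappa_p$ versus $\kappa_p^2$ (and $\mu_p$ versus $\mu_p^2$). The only difference is that you spell out the injectivity/surjectivity bookkeeping and the length case explicitly, which the paper compresses into ``the last two results are similar.''
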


\begin{proof}
  By Lemmas \ref{chan area} and \ref{fleron area},
  \begin{gather*}
    a_p(m+1)\leq a_0(m)\leq a_p(2m+1)\\
    \left(a_p(m+1)^{1/(m+1)}\right)^{(m+1)/m}\leq a_0(m)^{1/m}\leq\left(a_p(2m+1)^{1/(2m+1)}\right)^{(2m+1)/m},
  \end{gather*}
  and the first two results follow by taking limits.  The last two results are similar, but use Lemmas \ref{chan perimeter} and \ref{fleron perimeter}.
\end{proof}

We now adapt an argument by Klarner for polyominoes \cite{klarner} to the Tangle case and show that the limits
\begin{equation*}
  \kappa = \lim_{m\to\infty}a_0(m)^{1/m}\text{ and }
  \mu = \lim_{c\to\infty}\ell_0(c)^{1/c}
\end{equation*}
exist.

In fact, since the dihedral group of symmetries of the square has eight elements, we have
\begin{equation*}
  \frac{1}{8}a_0(m)\leq a_2(m) \leq a_1(m) \leq a_0(m)
\end{equation*}
\begin{equation*}
  \frac{1}{8}\ell_0(c)\leq \ell_2(c) \leq \ell_1(c) \leq \ell_0(c),
\end{equation*}
and since $\lim_{n\to\infty}\left(\frac{1}{8}\right)^{1/n}=1$, it follows by the sandwich theorem that
\begin{equation*}
  \kappa = \lim_{m\to\infty}a_i(m)^{1/m}\text{ and }
  \mu = \lim_{c\to\infty}\ell_i(c)^{1/c}
\end{equation*}
for all $i$, i.e., the growth constants are the same for fixed, one-sided, and free Tangles.

\begin{lemma}\label{log-superadditive}
  The functions $a_0$ and $\ell_0$ are log-superadditive, i.e.,
  \begin{align*}
    a_0(m_1)a_0(m_2) &\leq a_0(m_1+m_2)\\
    \ell_0(c_1)\ell_0(c_2) &\leq \ell_0(c_1+c_2)
  \end{align*}
\end{lemma}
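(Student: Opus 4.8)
The plan is to establish log-superadditivity by exhibiting an explicit injection that glues a size-$m_1$ Tangle and a size-$m_2$ Tangle into a single size-$(m_1+m_2)$ Tangle, in a way that is reversible (so distinct pairs map to distinct Tangles). Since size equals the number of edges in the dual graph, the natural move is to work entirely at the level of dual graphs: concatenate the two dual graphs so that the resulting polystick is again a valid dual graph (a tree or a polystick whose only chordless cycles are squares) with exactly $m_1+m_2$ edges. The same construction, tracked through Lemma \ref{class with squares}, will simultaneously yield the additive statement for class and hence for $\ell_0$.

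Concretely, I would first place each dual graph in a normalized position -- say, translate so that each lies in a standard bounding box and pick a distinguished vertex of each (for instance, the bottommost vertex in the leftmost occupied column). The gluing operation takes the first dual graph and attaches to its distinguished vertex a single new edge pointing in a fixed cardinal direction (say east), then attaches the second dual graph at the far endpoint of that new edge, placed far enough to the right that the two graphs do not overlap or create spurious adjacencies. The key bookkeeping point is the edge count: the first graph contributes $m_1$ edges, the second contributes $m_2$, and we must arrange the splice so that it adds no net edges -- i.e. the connecting segment is absorbed by reusing a leaf edge of one component rather than by introducing a genuinely new stick. I would therefore splice along a dangling leaf so that the combined object has precisely $m_1+m_2$ edges, introduces no new chordless cycle (the only cycles remain the original squares), and is thus a legitimate dual graph of a Tangle of size $m_1+m_2$.

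The main obstacle is ensuring injectivity together with the exact edge count at the same time. For injectivity I want the splice to be \emph{recognizable}: given the composite dual graph, one must be able to locate the cut uniquely and recover the ordered pair $(G_1,G_2)$. The standard way to force this is to insist the two pieces occupy disjoint half-planes separated by a wide empty gap, so the unique minimal-width horizontal corridor separating the occupied regions marks the splice; then reading off the left and right components recovers $G_1$ and $G_2$ canonically. The delicate part is confirming that such a canonical corridor always exists and is uniquely determined after normalization, which is what upgrades the counting inequality $a_0(m_1)a_0(m_2)\le a_0(m_1+m_2)$ from a surjection-style bound to the injection we actually need.

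Finally, I would verify the class statement in parallel. Under the splice, the number of squares is $k_1+k_2$ and the number of edges is $m_1+m_2$, so by Lemma \ref{class with squares} the composite Tangle has class $(m_1+m_2)-2(k_1+k_2)+1$, whereas the two factors have classes $m_i-2k_i+1$ summing to $(m_1+m_2)-2(k_1+k_2)+2$. The discrepancy of $1$ is exactly the effect of the single connecting edge, so I would adjust the construction for the class version to consume one edge in the join (producing additive class $c_1+c_2$ rather than merely a bound), and then the inequality $\ell_0(c_1)\ell_0(c_2)\le\ell_0(c_1+c_2)$ follows by the same injective argument applied to Tangles indexed by class instead of size.
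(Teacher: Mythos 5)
Your high-level strategy---a Klarner-style injective concatenation of dual graphs, with the class statement tracked through Lemma \ref{class with squares}---is the same as the paper's, and your arithmetic for the class case is right: joining with one genuinely new edge raises the edge count by one while leaving the square count at $k_1+k_2$, so the composite has class exactly $c_1+c_2$. The genuine gap is in your size case. You arrange for the join to ``add no net edges'' by insisting that the connecting segment be a \emph{dangling leaf} of one component, but dual graphs need not have any leaves: the dual graph of the size-$4$, class-$3$ Tangle is a single square (four vertices, all of degree $2$), so if both Tangles are of this type your construction is undefined. The paper sidesteps this entirely: for the size case it uses no connecting edge at all, instead translating the second dual graph so that the leftmost vertex of its bottom row \emph{coincides with} (is identified with) the rightmost vertex of the top row of the first. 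Merging at a single vertex adds no edge, that vertex is a cut vertex so no new chordless cycles arise, and the sizes add.

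Your injectivity mechanism also fails as stated. The composite dual graph is connected (its Tangle is one closed curve) and polystick edges have unit length, so the two pieces can never ``occupy disjoint half-planes separated by a wide empty gap''; the canonical corridor you hope to locate does not exist. Moreover, normalizing both pieces by the \emph{same} corner (bottommost vertex of the leftmost column for each) and joining eastward produces overlaps in general: if the first piece is a horizontal path, your new east-pointing edge lies on top of its first edge. The repair, which is what the paper's choice of junction accomplishes, is to join \emph{opposite} extremes: the top-rightmost vertex of $G_1$ to the bottom-leftmost vertex of $G_2$. Then $G_1$ lies in rows at or below the junction row and weakly to its left within that row, while $G_2$ lies in rows at or above it and weakly to its right, so no collision is possible; and the pair $(G_1,G_2)$ is recoverable from the composite (scan rows from the bottom and use the known value $m_1$, respectively $c_1$), which is precisely the injectivity needed to conclude $a_0(m_1)a_0(m_2)\leq a_0(m_1+m_2)$ and $\ell_0(c_1)\ell_0(c_2)\leq\ell_0(c_1+c_2)$.
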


\begin{proof}
  First consider Tangles enumerated by area.  Take one fixed Tangle of size $m_1$ and another fixed Tangle of size $m_2$, and then translate their dual graphs so that the rightmost vertex in the top row of the first dual graph coincides with the leftmost vertex in the bottom row of the second dual graph.  This one-to-one operation results in the dual graph of a Tangle with size $m_1+m_2$.

  Now consider Tangles enumerated by length.  Take one fixed Tangle of length $4c_1$ and another fixed Tangle of length $4c_2$, and then translate their dual graphs so that the rightmost vertex in the top row of the first dual graph is directly below the leftmost vertex in the bottom row of the second dual graph.  Now connect these two vertices with an edge.  This one-to-one operation results in the dual graph of a Tangle with length $4(c_1+c_2)$.  Indeed, one link from each of the original Tangles was removed in the process, but two new ones were added surrounding the new edge.  See Figure \ref{log-superadditive proof}.
\end{proof}

\begin{figure}
  \begin{tikzpicture}
    \path (0,0) node[draw,shape=circle,fill=gray] (a) {};
    \path (0,{sqrt(2)}) node[draw,shape=circle,fill=gray] (b) {};
    \draw[line width=4pt,lightgray] (45:0.5) arc (45:135:0.5);
    \draw[line width=4pt,lightgray] (b) + (-45:0.5) arc (-45:-135:0.5);
    \draw[line width=4pt] (45:0.5) arc (225:135:0.5);
    \draw[line width=4pt] (135:0.5) arc (-45:45:0.5);
    \draw[dashed] (a) -- (b);
    \foreach \i in {0, 1, 2, 3}
    \fill[white] (0, {1/sqrt(2)}) + ({45 + 90 * \i}:0.5) circle (1.5pt);
  \end{tikzpicture}
  \caption{Proof of Lemma \ref{log-superadditive}}
  \label{log-superadditive proof}
\end{figure}

\begin{theorem}
  The Tangle growth constants $\kappa$ and $\mu$ defined above exist.  Furthermore,
  \begin{gather*}
    \kappa_p\leq \kappa \leq \kappa_p^2\\
    \mu_p \leq \mu \leq \mu_p^2.
  \end{gather*}
\end{theorem}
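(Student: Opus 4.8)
The plan is to recognize Klarner's argument as an application of Fekete's lemma to the logarithms of $a_0$ and $\ell_0$, and then to combine the resulting existence statement with the sandwich bounds of Lemma~\ref{liminf and limsup bounds} to extract the two chains of inequalities. Since the discussion preceding the theorem already equates the growth constants of the fixed, one-sided, and free families, it suffices to treat $a_0$ and $\ell_0$.

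First I would pass to logarithms. Since there is at least one fixed Tangle of every size, $a_0(m)\geq 1$ for all $m$, so $\log a_0(m)$ is well-defined and nonnegative, and Lemma~\ref{log-superadditive} states precisely that $m\mapsto\log a_0(m)$ is superadditive:
\[
  \log a_0(m_1+m_2)\geq\log a_0(m_1)+\log a_0(m_2).
\]
Fekete's lemma then gives
\[
  \lim_{m\to\infty}\frac{\log a_0(m)}{m}=\sup_{m\geq 1}\frac{\log a_0(m)}{m},
\]
so that, on exponentiating, $\kappa=\lim_{m\to\infty}a_0(m)^{1/m}$ exists as an element of $(1,+\infty]$. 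The same reasoning applied to $\ell_0$ shows that $\mu=\lim_{c\to\infty}\ell_0(c)^{1/c}$ exists.

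With existence in hand, the inequalities follow by observing that a convergent sequence has its liminf and limsup equal to its limit. Lemma~\ref{liminf and limsup bounds} then yields
\[
  \kappa_p\leq\liminf_{m\to\infty}a_0(m)^{1/m}=\kappa=\limsup_{m\to\infty}a_0(m)^{1/m}\leq\kappa_p^2,
\]
and analogously $\mu_p\leq\mu\leq\mu_p^2$. The upper bounds $\kappa\leq\kappa_p^2$ and $\mu\leq\mu_p^2$ simultaneously confirm that the Fekete suprema are finite, ruling out the a priori possibility of a limit equal to $+\infty$.

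I do not expect a genuine obstacle; the two preceding lemmas have been arranged so that the theorem reduces to bookkeeping. The only points requiring attention are verifying the hypotheses of Fekete's lemma --- positivity, to make the logarithm meaningful, and superadditivity, which is Lemma~\ref{log-superadditive} verbatim --- and then using the ceiling from Lemma~\ref{liminf and limsup bounds} to exclude the degenerate infinite limit.
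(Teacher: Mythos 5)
Your proposal is correct and follows essentially the same route as the paper: apply Fekete's lemma to the (super/sub)additive sequence $\log a_0(m)$ furnished by Lemma~\ref{log-superadditive}, then use Lemma~\ref{liminf and limsup bounds} both to rule out an infinite limit and to sandwich $\kappa$ and $\mu$ between the polyomino constants. The only cosmetic difference is that the paper negates the inequality and cites the subadditive form of Fekete, invoking boundedness up front rather than excluding $+\infty$ after the fact.
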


\begin{proof}
  We prove the existence of $\kappa$.  The proof for $\mu$ is essentially the same.

  Taking logarithms and then negating the inequality from Lemma \ref{log-superadditive}, we have
  \begin{equation*}
    -\log a_0(m_1+m_2)\leq-\log a_0(m_1)-\log a_0(m_2),
  \end{equation*}
  and so by Fekete's subadditive lemma \cite{fekete},
  \begin{equation*}
    \lim_{m\to\infty}\frac{\log a_0(m)}{m} = \lim_{m\to\infty}\log a_0(m)^{1/m}
  \end{equation*}
  exists so long as $(a_0(m)^{1/m})$ is bounded.  But it is by Lemma \ref{liminf and limsup bounds}.

  The bounds on $\kappa$ and $\mu$ follow directly from Lemma \ref{liminf and limsup bounds}.
\end{proof}

\section*{Acknowledgements}

Many thanks go to the author's son, Gabriel, whose love of toy trains inspired this paper.  Thanks also to Ron Taylor, who introduced the author to existing literature on Tangles, to Julian Fleron for providing a copy of \cite{fleron1}, and to the anonymous referee for several useful comments.

\end{document}